\documentclass[preprint,12pt]{elsarticle}

\usepackage{graphicx}
\usepackage{amssymb, amsthm, amsmath, fullpage, hyperref}
\usepackage{graphicx, enumerate}
\usepackage{tikz-network}
\usetikzlibrary{positioning, quotes}
\usepackage[utf8]{inputenc}
\usepackage{cleveref}
\crefname{thm}{Theorem}{Theorems}
\crefname{lem}{Lemma}{Lemmas}
\usepackage{amsfonts}
\usepackage{amssymb}
\usepackage{MnSymbol}
\usepackage{amsmath,amsthm,dsfont}
\usepackage[english]{babel}
\usepackage{makeidx}
\usepackage{mathtools}
\usepackage{bbm}
\usepackage{fullpage,amsfonts,amssymb,epsfig,epstopdf,amsmath,url,array,titlesec}
\usepackage{mathtools}
\usepackage{tikz}
\usepackage{float}
\usepackage{makeidx}
\usepackage[mathscr]{euscript}
\usepackage{nomencl}
\usepackage{comment}
\allowdisplaybreaks

\theoremstyle{plain}
\newtheorem{thm}{Theorem}[section]

\theoremstyle{definition}
\newtheorem{defn}[thm]{Definition}

\newtheorem{rem}[thm]{Remark}

\newtheorem{claim}[thm]{Claim}

\newtheorem{theorem}{Theorem}
\newtheorem{case}{Case}
\makeatletter
\@addtoreset{case}{subsection}

\newtheorem{subcase}{Subcase}[case]

\makeatother
\newtheorem{subsubcase}{Subsubcase}[subcase]

\journal{Discrete Mathematics}

\begin{document}

\begin{frontmatter}

\title{Off-diagonal Rado number for $x+y+c=z$ and $x+qy=z$}

\author[1]{Rajat Adak}
\ead{rajatadak@iisc.ac.in}
\author[1]{Yash Bakshi}
\ead{yashbakshi@iisc.ac.in}
\author[1]{L. Sunil Chandran\corref{cor1}}
\ead{sunil@iisc.ac.in}
\author[1]{Saraswati Girish Nanoti}
\ead{saraswatig@iisc.ac.in}
\cortext[cor1]{Corresponding author}
\address[1]{Department of Computer Science and Automation, Indian Institute of Science, Bangalore, India}

\begin{abstract}
Ramsey-type problems for linear equations began with Schur’s theorem and were
systematically generalized by Richard Rado.  In the off-diagonal framework for two colors, one considers two different linear equations $(\mathcal{E}_1,\mathcal{E}_2)$ and determines the minimum integer $N$ for which any red–blue coloring of $\{1,2,\dots,N\}$ forces either a red solution of the  equation $\mathcal{E}_1$  or a blue solution of the equation $\mathcal{E}_2$.  
In this work, we study off-diagonal Rado numbers for non-homogeneous linear equations of the forms $x+y+c=z$ and $x+qy=z$. We determine the exact two-color off-diagonal Rado number $R_2(c,q)$ associated with this system of equations.
\end{abstract}

\begin{keyword}
Off-Diagonal Rado number
\end{keyword}

\end{frontmatter}
\section{Introduction}

Schur’s classical theorem \cite{1916Schur} asserts that for any finite coloring of the
positive integers, there exists a monochromatic solution to the equation $x+y=z$.  Rado \cite{rado1933studien} extended this result by characterizing which systems of linear equations are \emph{regular}, meaning that every finite coloring of the integers contains a monochromatic solution.

While most early work focused on diagonal Ramsey-type problems, where the solution to same equation must appear monochromatically, more recent research has explored \emph{off-diagonal} variants.  In these problems, one seeks to force a monochromatic solution to one of two distinct equations, each assigned a different color. This framework constitutes a natural generalization of the classical Schur numbers and expands the scope of combinatorial and structural analysis..

In particular, off-diagonal Rado numbers describe the interaction between two different linear configurations and often require methods that differ from those used in the diagonal case. Even for simple linear equations, determining exact off-diagonal Rado numbers is highly non-trivial and remains open in many cases.

\begin{defn}[Rado number]
Let $\mathcal{E}$ be a linear equation with integer coefficients. 
The \emph{t-color Rado number} of $\mathcal{E}$, denoted by $R_t(\mathcal{E})$, is the smallest positive integer $N$ such that every 
coloring using $t$-colors of the set $[1,N]$ contains a monochromatic solution to $\mathcal{E}$. If no such integer $N$ exists, then $R_t(\mathcal{E})$ is said to be infinite.
\end{defn}

\begin{defn}[Off-diagonal Rado number]
Let $\mathcal{E}_0$ and $\mathcal{E}_1$ be linear equations with integer coefficients. 
The \emph{two-color off-diagonal Rado number} of the pair of equations $(\mathcal{E}_0,\mathcal{E}_1)$, denoted by $R_2(\mathcal{E}_0,\mathcal{E}_1)$, is the smallest positive integer $N$ such that every red--blue coloring of the set $[1,N]$ contains either a red monochromatic solution to $\mathcal{E}_0$, or a blue monochromatic solution to $\mathcal{E}_1$. If no such integer $N$ exists, then $R_2(\mathcal{E}_0,\mathcal{E}_1)$ is said to be infinite.
\end{defn} This definition reduces to the classical Rado number when $\mathcal{E}_0=\mathcal{E}_1$, and to Schur numbers when both equations are $x+y=z$.
\section{Previous Work}

Schur \cite{1916Schur} proved that the equation \(x+y=z\) is regular and introduced what are now called Schur numbers. Rado \cite{rado1933studien} generalized this by characterizing all linear systems that are regular, thereby determining precisely which systems necessarily admit monochromatic solutions under every finite coloring. Burr and Loo \cite{BurrLoo1975} initiated the systematic study of exact Rado numbers for specific linear equations ($x+y+c=z$) and provided bounds and constructions for several families. Beutelspacher and Brestovansky \cite{beutelspacher2006generalized} showed that for any positive integer $k$, the $2$-color Rado number for the equation
$x_1 + x_2 + \cdots + x_k = x_0$ is $k^2 + k - 1$. 

There are many single linear equations which has been studied for 2 or 3 color settings.
\[
x+y+c=kz, \quad (\text{in } \cite{jones2004two})
\]
\[
x_1+x_2+..+x_{m-1} +c= x_m \quad ( c < 0)\quad (\text{in } \cite{kosek2001rado})
\]

There have been some results involving nonlinear
equations as well::
\[
\frac{1}{x} + \frac{1}{y} = \frac{1}{z}, \quad (\text{in } \cite{doss2013excursion})
\]
\[
x + y^{n} = z. \quad (\text{in } \cite{MyersParrish2018})
\]

The \textbf{off-diagonal} setting was first studied by Robertson and Schaal~\cite{robertson_schaal2001}, who determined exact two-color off-diagonal generalized Schur numbers. Specifically, for positive integers \(k\) and \(l\), they determined the smallest integer \(S=S(k,l)\) such that any red--blue coloring of \([1,S]\) yields either a red solution to $x_1+x_2+\cdots+x_k = x_0$ or a blue solution to $x_1+x_2+\cdots+x_l = x_0$.

Myers and Robertson \cite{myers2006two} later expanded this framework and obtained exact formulas for several families of homogeneous off-diagonal Rado numbers, primarily focusing on equations of the form \(x+qy=z\) and related systems.  However, off-diagonal problems involving \emph{non-homogeneous} linear equations, such as shifted Schur-type equations, remain far less explored and present a range of open problems.

\section{Main Result}
\begin{theorem}
Let $c\ge1$ and $q\ge 1$ be positive integers. Then the two-color off-diagonal Rado number $R_2(c,q)$ for the system 

\begin{align*}
x+y+c &= z \quad \text{(red)},\\
x+qy  &= z \quad \text{(blue)},
\end{align*}
is:

\[
R_2(c,q) =
\begin{cases}
\infty, & \text{if $q$ is odd, $c$ is odd} \\
2c+4,  & \text{if $q=1$ and $c$ is even} \\
(q+1)(c+2)+c+1 & \text{otherwise }
\end{cases}
\]
\end{theorem}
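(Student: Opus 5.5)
The proof splits into three parts: the infinite case, explicit colorings giving the lower bounds, and a forcing argument for the upper bounds.

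\textbf{The infinite case.} When $q$ and $c$ are both odd, I would color every even integer red and every odd integer blue.
A red solution of $x+y+c=z$ would have $x,y$ even, so $z=x+y+c$ is odd and hence blue.
A blue solution of $x+qy=z$ would have $x,y$ odd, so $z=x+qy$ is even (odd plus odd) and hence red.
This coloring of all of $\mathbb{Z}^+$ has neither solution, so $R_2(c,q)=\infty$. In every remaining case (some parity is even) I would prove matching lower and upper bounds.

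\textbf{Lower bounds.} For $q=1$ with $c$ even, I would color $[1,c+1]$ red and $[c+2,2c+3]$ blue.
Any red solution has $z\ge 1+1+c=c+2$, which is blue.
Any blue solution has $z\ge 2(c+2)=2c+4$, which lies outside the interval.
Hence $R_2\ge 2c+4$.

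For the general formula, let $M=(q+1)(c+2)$. I would color $[1,c+1]$ red, $[c+2,M-1]$ blue, and $[M,M+c]$ red.
Any blue solution has $z\ge (c+2)+q(c+2)=M$, which is red or out of range.
For red solutions with $x,y\le c+1$ we get $c+2\le z\le 3c+2$. This is below $M$ as soon as $q\ge 2$, since then $M\ge 3c+6$; so $z$ is blue.
If one of $x,y$ is at least $M$, then $z\ge M+1+c$, which is out of range.
This gives $R_2\ge M+c+1$ for $q\ge2$. The remaining subcase of ``otherwise'', namely $q=1$ with $c$ odd, is already in the infinite case, so the lower bounds are complete.

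\textbf{Upper bounds.} Suppose a coloring $\chi$ of $[1,N]$ avoids both a red solution and a blue solution. I would derive a chain of forced colors starting from $1$.

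In the case $q=1$, $c$ even, $N=2c+4$:
\begin{itemize}
\item If $1$ is red, then $c+2$ (from $1+1+c$) must be blue, and then $2c+4=(c+2)+(c+2)$ must be red.
\item We then look at $c+3=1+2+c$ and $2c+4=1+(c+3)+c$ together with the color of $2$, and propagate further.
\item If instead $1$ is blue, then $2=1+1$ is red, $c+4$ is blue, and so on.
\end{itemize}
Evenness of $c$ should make some forced element collide with both colors; for example, $(c+2)/2$ doubling to $c+2$ under $x+x=z$ is where the parity of $c$ enters.

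For the general case, $N=M+c+1$, I would run the analogous forcing:
\begin{itemize}
\item Assume $1$ is red (the case $1$ blue is handled similarly, via $1+q=q+1$ being red, then $2(q+1)+c$ blue, etc.).
\item Then $c+2$ is blue, so $(q+1)(c+2)=(c+2)+q(c+2)$ is red.
\item Then $M+c+1=1+M+c$ must be blue.
\item Next I would locate blue $x,y$ with $x+qy=M+c+1$, for instance $y=c+2$ and $x=c+1+(c+2)$ adjusted suitably, or consider the color of $c+1$, $2c+3$ and $M-q$.
\end{itemize}
Where the chains branch I would split into cases on the colors of a few small numbers ($2$, $c+1$, $c+3$, $q$, $q+1$). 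In each case I would use the parity of $q$ or $c$ (one of them is even) to find a contradiction, as in the infinite case: in effect, the even parameter blocks the parity coloring, and the forcing exhibits this.

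\textbf{Main obstacle.} I expect the hardest step to be the upper-bound case analysis in the general case when $1$ is blue, or when $q$ is small relative to $c$. There the forced elements may run past $N$, and the contradiction must come from a carefully chosen pair like $(x,y)=(M-q(c+2)+\dots)$.

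My first attempt would be to show that every element of $[1,c+1]$ is red and every element of $[c+2,M-1]$ is blue. That is, I would show the extremal coloring above is forced, and then check that $M+c+1$ can be given neither color.
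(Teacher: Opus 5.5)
Your infinite case and both lower-bound colorings coincide with the paper's: the parity coloring; $[1,c+1]$ red and $[c+2,2c+3]$ blue; and $[1,c+1]\cup[M,M+c]$ red with $[c+2,M-1]$ blue. You also correctly observe that $q\ge 2$ is what gives $3c+2<M$.

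The genuine gap is the upper bound, which is where all the content of the theorem lies. In each branch you record two to four forced colors and then write ``propagate further'', ``and so on'', or ``split into cases on the colors of a few small numbers''. No branch is ever carried to a monochromatic solution, so neither $R_2(c,q)\le 2c+4$ nor $R_2(c,q)\le M+c+1$ is actually shown.

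The heuristic that ``the even parameter blocks the parity coloring'' does not substitute for this. In the paper, the parity hypothesis is used only to make specific half-values integers, so that $x=y$ substitutions are available. These are $c/2+2$, $(c+2)/2$ and $q(c+2)/2+1$ when $c$ is even; $(c+1)/2$, $(c+3)/2$ and $(c+5)/2$ when $c$ is odd; and $qc/2$ whenever $qc$ is even. Your fallback plan of first proving that the extremal coloring is forced on $[1,M+c]$ is a further unproved, and unnecessary, claim. The paper instead uses $M+c+1$ already at the third forced step and aims directly at a contradiction.

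What is missing is an explicit terminating chain in each branch.
\begin{itemize}
\item For $q=1$, $c$ even, $1$ red, the paper forces $c+2$ blue, $2c+4$ red, $c/2+2$ blue, $3c/2+4$ red, $c/2+3$ blue, $c+5$ red, $4$ blue, $2$ red and $c+4$ red. Then $(2,2,c+4)$ is a red solution.
\item For $q=1$, $c$ even, $1$ blue, it forces $2$ red, $c+4$ blue, $c/2+2$ red, $3c/2+4$ blue, $c/2$ red, $2c$ blue, $c$ red, $2c+2$ blue, $c+1$ red, and $2c+3$, $2c+4$ blue. Then $(1,2c+3,2c+4)$ is a blue solution.
\item In the general case your opening agrees with the paper. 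Indeed $x=2c+3$ works exactly, since $2c+3+q(c+2)=M+c+1$, which forces $2c+3$ red.
\item For $c$ even, $1$ red, the paper continues through $q(c+2)/2+1$ and $q(c+2)+1$ blue, $(c+2)/2$ red, $2c+2$ blue, $M+c$ red, $c+1$ red, $2$ blue, $\dots$. It ends at the red solution $(2q+2,\,qc+c,\,M+c)$.
\item The $1$-blue branch ($q+1$ red, $2q+c+2$ blue, $q+c+2$ red, $\dots$) ends at the red solution $(q+3,\,2q+c,\,3q+2c+3)$. It uses only that $qc$ is even, so it covers both parities.
\end{itemize}

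Your worry about forced elements leaving $[1,N]$ is justified, and every step must be range-checked. In the paper's chain for $c$ odd with $1$ red, the step $(2q,2,4q)$ needs $4q\le N$. For $c=1$ we have $N=3q+5$, so this fails once $q\ge 6$. In that case one can finish from the opening steps ($3$ blue, $3q+3$ red, $2$ blue):
\begin{itemize}
\item $(3q+1,1,3q+3)$ forces $3q+1$ blue;
\item $(q+1,2,3q+1)$ then forces $q+1$ red;
\item $(q+1,q+1,2q+3)$ then forces $2q+3$ blue;
\item finally $(3,2,2q+3)$ is a blue solution.
\end{itemize}
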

\begin{proof} We denote $x+y+c =z$ as the $c$ equation and $x+qy=Z$ as the $q$ equation. We evaluate $R_2(c,q)$ case by case.
\begin{case}
    Both $c$ and $q$ are odd.
\end{case}
We define a $2$-coloring $\chi : \mathbb{N} \longrightarrow \{\text{Red}, \text{Blue}\}$ as follows:
$$
\chi(n) =
\begin{cases}
\text{Blue}, & \text{if } n \text{ is odd}, \\
\text{Red},  & \text{if } n \text{ is even}.
\end{cases}
$$
We show that under this coloring, there is neither a red solution to the $c$ equation, nor a blue solution to the $q$ equation.

\medskip

\noindent
\textbf{Absence of a red solution to $c$ equation:}
 If $x$ and $y$ are both red, then $x$ and $y$ are even, and hence $x + y + c$ is odd. Thus, $z$ is odd and therefore colored blue. Hence, it is impossible for all variables to be red, and no red solution exists for the $c$ equation.

\medskip

\noindent
\textbf{Absence of a blue solution to $q$ equation:}  
 If $x$ and $y$ are both blue, then $x$ and $y$ are odd, and therefore $x + qy$ is even. Thus $z$ is even and hence colored red. Consequently, no blue solution exists for the $q$ equation.

\medskip

\noindent

The above coloring avoids both a red solution to the $c$ equation and a blue solution to $q$ equation. Therefore, there exists a coloring of $\mathbb{N}$ such that no monochromatic solution exists to either equation. This implies that the corresponding $R_2(c,q)$ is infinite.

\begin{case}
    $q=1$ and $c$ is even.
\end{case}
For $q=1$ the system becomes:
\begin{align*}
x+y+c &= z \quad \text{(red)},\\
x+y  &= z \quad \text{(blue).}
\end{align*}
We claim that $R_2(c,q)$ is \(2c+4\). This means that $2c+4$ is the smallest positive integer \(r\) such that every red--blue coloring of the integers \(\{1,2,\dots,r\}\) contains either a red solution to $c$, or a blue solution to $q$.

\begin{claim}\label{clm3.1}
    $R_2(c,q) \geq 2c+4$
\end{claim}
\begin{proof} Let $R = \{1,2,\dots,c+1\}$ and $B = \{c+2,c+3,\dots,2c+3\}$. Consider the 2-coloring of $[1,2c+3]$ defined by
$$
\chi(n) = \text{Red }, \text{ if $n \in R$},
\qquad
\chi(n) = \text{Blue }, \text{ if $n \in B$}.
$$
We will show that under this coloring there is
\emph{no red solution} to equation $c$ and \emph{no blue solution} to equation $q$ with all variables in $[1,2c+3]$.

\smallskip
\noindent\textbf{No red solution to $c$ equation:}\\
If $x$ and $y$ are red, then $1\le x,y\le c+1$, hence $z=x+y+c \ge 1+1+c = c+2$. Therefore $z\in\{c+2,\dots,2c+3\}$, and thus colored blue. So $(x,y,z)$ cannot be a monochromatic red triple, and thus there is no red solution.

\smallskip
\noindent\textbf{No blue solution to $q$ equation:}\\
If $x$ and $y$ are blue, then $c+2\le x,y\le 2c+3$, hence $z=x+y \;\ge\; (c+2)+(c+2) \;=\; 2c+4 \;>\; 2c+3$.
So $z$ does not lie in the interval $[1,2c+3]$, contradicting the requirement that all variables are within the colored set. Hence, there is no blue solution to $x+y=z$ in $[1,2c+3]$.

\smallskip
\noindent
Consequently, this coloring avoids both types of forbidden solutions on $[1,2c+3]$. In particular, the off-diagonal Rado number satisfies $R_2(c,q)\ge 2c+4$.\end{proof}

\begin{claim}\label{clm3.2}
    $R_2(c,q) \leq 2c+4$
\end{claim}
\begin{proof}We show that any red--blue coloring of $[1,2c+4]$ gives either a red solution to $x+y+c=z$ or blue solution to $x+y=z$. Assume for the sake of contradiction, there is a coloring such that no red solution to $x+y+c=z$ and no blue solution to $x+y=z$ exists, respectively. We start with an arbitrary coloring of numbers starting with $1$ (and using numbers upto $2c+4$), and try to extend this coloring such that the desired conditions are satisfied, but end up with either a red solution to the first equation or a blue solution to the second equation. 
\vspace{2mm}

\noindent\textbf{Notation.} We adopt the convention that, when trying to avoid monochromatic (red or blue) solutions, a given coloring together with a particular solution—represented as an ordered triple \((x,y,z)\)
—may force an extension of the coloring. For example if $1$ is red, taking $x = y =1$ in $c$ equation, to avoid a red solution, we get,
\[
(1,1,c+2)_c\;\Longrightarrow\; c+2 \text{ is blue}.
\]
\begin{subcase} Assume that $1$ is colored red. \end{subcase}
\begingroup
\setlength{\jot}{2pt} 
\begin{align*}
(1,1,c+2)_c&\Longrightarrow {c+2} \text{ is blue}.\\
(c+2,c+2,2c+4)_q&\Longrightarrow {2c+4} \text{ is red}.\\
\left(\tfrac{c}{2}+2,\tfrac{c}{2}+2,2c+4\right)_c&\Longrightarrow {\tfrac{c}{2}+2} \text{ is blue}.\\
\left(c+2,\tfrac{c}{2}+2,\tfrac{3c}{2}+4\right)_q&\Longrightarrow {\tfrac{3c}{2}+4} \text{ is red}.\\
\left(1,\tfrac{c}{2}+3,\tfrac{3c}{2}+4\right)_c&\Longrightarrow {\tfrac{c}{2}+3} \text{ is blue}.\\
\left(\tfrac{c}{2}+2,\tfrac{c}{2}+3,c+5\right)_q&\Longrightarrow {c+5} \text{ is red}.\\
(1,4,c+5)_c&\Longrightarrow {4} \text{ is blue}.\\
(2,2,4)_q&\Longrightarrow {2} \text{ is red}.\\
\left(\tfrac{c}{2}+2,\tfrac{c}{2}+2,c+4\right)_q&\Longrightarrow {c+4} \text{ is red}.
\end{align*}
\text{Thus, }$(2,2,c+4)$\text{ is a red solution to the first equation, a contradiction.}
\endgroup

\begin{subcase}
    $1$ is colored blue.
\end{subcase} 
\begingroup

\begin{align*}
(1,1,2)_q&\Longrightarrow {2} \text{ is red}.\\
(2,2,c+4)_c&\Longrightarrow {c+4} \text{ is blue}.\\
\left(\tfrac{c}{2}+2,\tfrac{c}{2}+2,c+4\right)_q&\Longrightarrow {\tfrac{c}{2}+2} \text{ is red}.\\
\left(2,\tfrac{c}{2}+2,\tfrac{3c}{2}+4\right)_c&\Longrightarrow {\tfrac{3c}{2}+4} \text{ is blue}.\\
\left(\tfrac{c}{2},c+4,\tfrac{3c}{2}+4\right)_q&\Longrightarrow {\tfrac{c}{2}} \text{ is red}.\\
\left(\tfrac{c}{2},\tfrac{c}{2},2c\right)_c&\Longrightarrow {2c} \text{ is blue}.\\
(c,c,2c)_q&\Longrightarrow {c} \text{ is red}.\\
(2,c,2c+2)_c&\Longrightarrow {2c+2} \text{ is blue}.\\
(c+1,c+1,2c+2)_q&\Longrightarrow {c+1} \text{ is red}.\\
(2,c+1,2c+3)_c&\Longrightarrow {2c+3} \text{ is blue}.\\
\left(\tfrac{c}{2}+2,\tfrac{c}{2}+2,2c+4\right)_c&\Longrightarrow {2c+4} \text{ is blue}.
\end{align*}
\text{Thus }$(1,2c+3,2c+4)$\text{ is a blue solution to the second equation, a contradiction.}
\endgroup

Thus, we arrive at a contradiction in both the subcases. Therefore, $R_2(c,q) \leq 2c+4$.
\end{proof}
From Claim \ref{clm3.1} and \ref{clm3.2} we get that when $q=1$ and $c$ is even, $R_2(c,q) = 2c+4$.

\begin{rem}
Note that the same proof does not follow when $c=0$ because $c+5 > 2c+4$ and thus is out of range. Therefore, the Rado number is different from that in \cite{1916Schur}
\end{rem}
\begin{case}
    At least one among $c$ and $q$ is even and $c >1$.
\end{case}
To show that the $R_2(c,q)$ is \(r = (q+1)(c+2)+c+1\), we have to establish the following two statements :  
\begin{itemize}
    \item there exists a coloring of the integers from \(1\) to \(r-1\) which admits neither a red solution to $c$ equation nor a blue solution to $q$ equation, thereby yielding a lower bound.
    \item for any red--blue coloring of the integers from \(1\) to \(r\),  either a red solution to $c$ equation or a blue solution to $q$ equation must necessarily occur, which establishes the upper bound.
\end{itemize}
\begin{claim}\label{clm3.3}
    $R_2(c,q) \geq (q+1)(c+2) + c+1$.
\end{claim} 

 \begin{proof} Let $R =  \{1,2,\dots, c+1\}\ \cup\ \{(q+1)(c+2),\dots,(q+1)(c+2)+c\}$ and $B = \{c+2,\dots,q(c+2)+c+1\}$. Consider the $2$-coloring of $[1,(q+1)(c+2)+c]$ defined by,
 $$
\chi(n) = \text{Red }, \text{ if $n \in R$},
\qquad
\chi(n) = \text{Blue }, \text{ if $n \in B$}.
$$
We show that under this coloring, there is neither a red solution to the $c$ equation, nor a blue solution to the $q$ equation.

\smallskip
\noindent\textbf{No blue solution to $q$ equation:}\\
 If both $x$ and $y$ are blue, then $x + qy \geq q(c+2) + (c+2)$. Thus, either $z \in R$ and therefore $z$ is red, or $z > (q+1)(c+2)+c$. Therefore there is no blue solution. 

\smallskip
\noindent\textbf{No red solution to $c$ equation:}\\
If $x, y \in \{1,2,\dots,c+1\}$, then $$x+y+c \geq c+2\ \text{ and }  \quad x+y+c \leq 3c+2 < (q+1)(c+2)$$ Thus $z \in B$. 
 Therefore, without loss of generality, we can assume $y \in  \{(q+1)(c+2),\dots,(q+1)(c+2)+c\}$. Thus,
    $$x+y +c \geq 1 + (q+1)(c+2) + c $$
which is out of range. Therefore, no red solution to $x+y+c=z$ exists.
Therefore, we get $R_2(c,q) \geq (q+1)(c+2)+c+1$  \end{proof}

\begin{claim}\label{clm3.4}
    $R_2(c,q) \leq (q+1)(c+2) + c +1$
\end{claim}
\begin{proof} Assume for the sake of contradiction, there is a coloring of $\{1,2,\dots,(q+1)(c+2)+c+1\}$ such that no red solution to $x+y+c=z$ and no blue solution to $x+y=z$ exists, respectively. We derive a
contradiction by analyzing the color assigned to $1$. Recall that, under the current case, we have either $c$ or $q$ to be even and $q >1$.

\begin{subcase}
    $c$ is even.
\end{subcase}
\begin{subsubcase} $1$ is colored red.\end{subsubcase}
\begingroup
\setlength{\jot}{2pt} 
\begin{align*}
(1,1,c+2)_c&\Longrightarrow {c+2} \text{ is blue}.\\
(c+2,c+2,(q+1)(c+2))_q&\Longrightarrow {(q+1)(c+2)}\text{ is red}.\\
((q+1)(c+2),1,(q+1)(c+2)+c+1)_c
  &\Longrightarrow {(q+1)(c+2)+c+1}\text{ is blue}.\\
(2c+3,c+2,(q+1)(c+2)+c+1)_q&\Longrightarrow {2c+3}\text{ is red}.\\
\Bigl(\tfrac{q(c+2)}{2}+1,\tfrac{q(c+2)}{2}+1,(q+1)(c+2)\Bigr)_c
  &\Longrightarrow {\tfrac{q(c+2)}{2}+1} \text{ is blue}.\\
(q(c+2)+1,1,(q+1)(c+2))_c&\Longrightarrow {q(c+2)+1} \text{ is blue}.\\
\Bigl(\tfrac{q(c+2)}{2}+1,\tfrac{c+2}{2},q(c+2)+1\Bigr)_q
  &\Longrightarrow {\tfrac{c+2}{2}}\text{ is red}.\\
\Bigl(\tfrac{c+2}{2},\tfrac{c+2}{2},2c+2\Bigr)_c&\Longrightarrow {2c+2} \text{ is blue}.\\
(2c+2,c+2,q(c+2)+2c+2)_q&\Longrightarrow {q(c+2)+2c+2} \text{ is red}. \displaybreak[2]\\
(q(c+2)+c+1,1,q(c+2)+2c+2)_c&\Longrightarrow {q(c+2)+c+1}\text{ is blue}.\\
(c+1,c+2,q(c+2)+c+1)_q&\Longrightarrow {c+1}\text{ is red} .\\
(c+1,2,2c+3)_c&\Longrightarrow {2}\text{ is blue}.\\
(2,2,2q+2)_q&\Longrightarrow {2q+2}\text{ is red}.\\
(2c+2,2,2q+2+2c)_q&\Longrightarrow {2q+2+2c}\text{ is red}.\\
(2q+2,c,2q+2+2c)_c&\Longrightarrow {c}\text{ is blue}.\\
(c,c,qc+c)_q&\Longrightarrow {qc+c}\text{ is red}.
\end{align*}
\text{Thus }$(2q+2,\ qc+c,\ q(c+2)+2c+2)$\text{ is a red solution, a contradiction.}
\endgroup

\begin{subsubcase}
    $1$ is colored blue.
\end{subsubcase}
\begingroup
\setlength{\jot}{2pt}
\begin{align*}
(1,1,q+1)_q&\Longrightarrow q+1 \text{ is red.}\\
(q+1,q+1,2q+2+c)_c&\Longrightarrow 2q+2+c \text{ is blue.}\\
(q+c+2,1,2q+2+c)_q&\Longrightarrow q+c+2 \text{ is red.}\\
(q+c+2,q+1,2q+2c+3)_c&\Longrightarrow 2q+2c+3 \text{ is blue.}\\
(2q+2c+3,1,3q+2c+3)_q&\Longrightarrow 3q+2c+3 \text{ is red.}\\
(2q+2+c,1,3q+c+2)_q&\Longrightarrow 3q+c+2 \text{ is red.}\\
(q+1,2q+1,3q+c+2)_c&\Longrightarrow 2q+1 \text{ is blue.}\\
(1,2,2q+1)_q&\Longrightarrow 2 \text{ is red.}\\
(2,2,c+4)_c&\Longrightarrow c+4 \text{ is blue}\\
(c+4,1,c+q+4)_q&\Longrightarrow c+q+4 \text{ is red.}\\
(q+1,3,c+q+4)_c&\Longrightarrow 3 \text{ is blue.}\\
(3,3,3q+3)_q&\Longrightarrow 3q+3 \text{ is red.}\\
(3q+3,c,3q+2c+3)_c&\Longrightarrow c \text{ is blue.}\\
(c,c,qc+c)_q&\Longrightarrow qc+c \text{ is red.}\\
(\tfrac{qc}{2},\tfrac{qc}{2},cq+c)_c&\Longrightarrow \tfrac{qc}{2} \text{ is blue.}\\
(\tfrac{qc}{2},1,\tfrac{qc}{2}+q)_q&\Longrightarrow \tfrac{qc}{2}+q \text{ is red.}\\
(\tfrac{qc}{2}+q,\tfrac{qc}{2}+q,qc+2q+c)_c&\Longrightarrow qc+2q+c \text{ is blue.}\\
(2q+c,c,qc+2q+c)_q&\Longrightarrow 2q+c \text{ is red.}\\
(3,1,q+3)_q&\Longrightarrow q+3 \text{ is red.}
\end{align*}
\text{Thus }$(q+3,\ 2q+c,\ 3q+2c+3)$\text{ is a red solution, a contradiction.}
\endgroup

\begin{subcase}
    $c$ is odd, and therefore $q$ must be even.
\end{subcase}

\begin{subsubcase}
    $1$ is colored red.
\end{subsubcase}

\begin{align*}
(1,1,c+2)_c&\Longrightarrow c+2 \text{ is blue.}\\
(c+2,c+2,(q+1)(c+2))_q&\Longrightarrow (q+1)(c+2) \text{ is red.}\\
((q+1)(c+2),1, (q+1)(c+2)+c+1 )_c&\Longrightarrow (q+1)(c+2)+c+1 \text{ is blue.}\\
(2c+3,c+2,(q+1)(c+2)+c+1)_q&\Longrightarrow 2c+3 \text{ is red.}\\
\left(\tfrac{c+3}{2},\tfrac{c+3}{2},2c+3\right)_c&\Longrightarrow \tfrac{c+3}{2} \text{ is blue.}\\
(qc+2q+1,1,(q+1)(c+2))_c&\Longrightarrow qc+2q+1 \text{ is blue.}\\
\left(\tfrac{qc}{2}+q+1,\tfrac{qc}{2}+q+1,(q+1)(c+2)\right)_c&\Longrightarrow \tfrac{qc}{2}+q+1 \text{ is blue.}\\
(\tfrac{qc}{2}+\tfrac{q}{2}+1,\tfrac{c+3}{2},qc+2q+1)_q&\Longrightarrow \tfrac{qc}{2}+\tfrac{q}{2}+1 \text{ is red.}\\
(\tfrac{qc}{2}+\tfrac{q}{2}+1,1,\tfrac{qc}{2}+\tfrac{q}{2}+c+2)_c&\Longrightarrow \tfrac{qc}{2}+\tfrac{q}{2}+c+2 \text{ is blue.}\\
(c+2,\tfrac{c+1}{2},\tfrac{qc}{2}+\tfrac{q}{2}+c+2)_q&\Longrightarrow \tfrac{c+1}{2} \text{ is red.}\\
(1,\tfrac{c+1}{2},\tfrac{3(c+1)}{2})_c&\Longrightarrow \tfrac{3(c+1)}{2} \text{ is blue.}\\
(\tfrac{c+1}{2}+2,\tfrac{c+1}{2},2c+3)_c&\Longrightarrow \tfrac{c+1}{2}+2=\tfrac{c+5}{2} \text{ is blue.}\\
(\tfrac{c+1}{2},\tfrac{c+1}{2},2c+1)_c&\Longrightarrow 2c+1 \text{ is blue.}\\
(\tfrac{c+5}{2},\tfrac{c+5}{2},\tfrac{q(c+5)}{2}+\tfrac{c+5}{2})_q&\Longrightarrow \tfrac{q(c+5)}{2}+\tfrac{c+5}{2} \text{ is red.}\\
(\tfrac{3(c+1)}{2},\tfrac{c+5}{2},\tfrac{q(c+5)}{2}+\tfrac{3(c+1)}{2})_q&\Longrightarrow \tfrac{q(c+5)}{2}+\tfrac{3(c+1)}{2} \text{ is red.}\\
(2c+1,\tfrac{c+5}{2},\tfrac{q(c+5)}{2}+2c+1)_q&\Longrightarrow \tfrac{q(c+5)}{2}+2c+1 \text{ is red.}\\
(1,\tfrac{q(c+5)}{2}+c,\tfrac{q(c+5)}{2}+2c+1)_c&\Longrightarrow \tfrac{q(c+5)}{2}+c \text{ is blue.}\\
(c,\tfrac{(c+5)}{2},\tfrac{q(c+5)}{2}+c)_q&\Longrightarrow c \text{ is red.}\\
(3,c,2c+3)_c&\Longrightarrow 3 \text{ is blue.}\\
(3,\tfrac{(c+5)}{2},\tfrac{q(c+5)}{2}+3)_q&\Longrightarrow \tfrac{q(c+5)}{2}+3 \text{ is red.}\\
(1,\tfrac{(c+5)}{2},\tfrac{q(c+5)}{2}+c+4)_c&\Longrightarrow \tfrac{q(c+5)}{2}+c+4\text{ is blue.}\\
(c+4,\tfrac{(c+5)}{2},\tfrac{q(c+5)}{2}+c+4)_q&\Longrightarrow c+4\text{ is red.}\\
(2,2,c+4)_c&\Longrightarrow 2 \text{ is blue.}\\
(2,2,2q+2)_q&\Longrightarrow 2q+2 \text{ is red.}\\
(2,\tfrac{c+5}{2},\tfrac{q(c+5)}{2}+2)_q&\Longrightarrow \tfrac{q(c+5)}{2}+2 \text{ is red.}\\
(1,\tfrac{q(c+5)}{2}+2,\tfrac{q(c+5)}{2}+c+3)_c&\Longrightarrow \tfrac{q(c+5)}{2}+c+3 \text{ is blue.}\\
(c+3,\tfrac{(c+5)}{2},\tfrac{q(c+5)}{2}+c+3)_q&\Longrightarrow c+3 \text{ is red.}\\
(1,c+3,2c+4)_c&\Longrightarrow 2c+4 \text{ is blue.}\\
(2c+4,2,2c+4)_q&\Longrightarrow 2q+2c+4 \text{ is red.}\\
(c+4,2q,2q+2c+4)_c&\Longrightarrow 2q \text{ is blue.}\\
(2q,2,4q)_q&\Longrightarrow 4q \text{ is red.}\\
(4q,1,4q+c+1)_c&\Longrightarrow 4q+c+1 \text{ is blue.}\\
(q+c+1,3,4q+c+1)_q&\Longrightarrow q+c+1 \text{ is red.}\\
(1,q,q+c+1)_c&\Longrightarrow q \text{ is blue.}\\
(q,2,3q)_q&\Longrightarrow 3q \text{ is red.}\\
(3q,1,3q+c+1)_c&\Longrightarrow 3q+c+1 \text{ is blue.}\\
(c+1,3,3q+c+1)_q&\Longrightarrow c+1 \text{ is red.}\\
(2q+c+1,2,4q+c+1)_q&\Longrightarrow 2q+c+1 \text{ is red.}\\
(1,2q+c+1,2q+2c+2)_c&\Longrightarrow 2q+2c+2 \text{ is blue.}\\
(2c+2,2,2q+2c+2)_q&\Longrightarrow 2c+2 \text{ is red.}
\end{align*}
Thus, $(1,c+1,2c+2)$ is a red solution, a contradiction.

\begin{subsubcase}
    $1$ is colored blue.
\end{subsubcase}
\begingroup
\setlength{\jot}{2pt}
\begin{align*}
(1,1,q+1)_q&\Longrightarrow q+1 \text{ is red.}\\
(q+1,q+1,2q+2+c)_c&\Longrightarrow 2q+2+c \text{ is blue.}\\
(q+c+2,1,2q+2+c)_q&\Longrightarrow q+c+2 \text{ is red.}\\
(q+c+2,q+1,2q+2c+3)_c&\Longrightarrow 2q+2c+3 \text{ is blue.}\\
(2q+2c+3,1,3q+2c+3)_q&\Longrightarrow 3q+2c+3 \text{ is red.}\\
(2q+2+c,1,3q+c+2)_q&\Longrightarrow 3q+c+2 \text{ is red.}\\
(q+1,2q+1,3q+c+2)_c&\Longrightarrow 2q+1 \text{ is blue.}\\
(1,2,2q+1)_q&\Longrightarrow 2 \text{ is red.}\\
(2,2,c+4)_c&\Longrightarrow c+4 \text{ is blue}\\
(c+4,1,c+q+4)_q&\Longrightarrow c+q+4 \text{ is red.}\\
(q+1,3,c+q+4)_c&\Longrightarrow 3 \text{ is blue.}\\
(3,3,3q+3)_q&\Longrightarrow 3q+3 \text{ is red.}\\
(3q+3,c,3q+2c+3)_c&\Longrightarrow c \text{ is blue.}\\
(c,c,qc+c)_q&\Longrightarrow qc+c \text{ is red.}\\
(\tfrac{qc}{2},\tfrac{qc}{2},cq+c)_c&\Longrightarrow \tfrac{qc}{2} \text{ is blue.}\\
(\tfrac{qc}{2},1,\tfrac{qc}{2}+q)_q&\Longrightarrow \tfrac{qc}{2}+q \text{ is red.}\\
(\tfrac{qc}{2}+q,\tfrac{qc}{2}+q,qc+2q+c)_c&\Longrightarrow qc+2q+c \text{ is blue.}\\
(2q+c,c,qc+2q+c)_q&\Longrightarrow 2q+c \text{ is red.}\\
(3,1,q+3)_q&\Longrightarrow q+3 \text{ is red.}
\end{align*}
\text{Thus }$(q+3,\ 2q+c,\ 3q+2c+3)$\text{ is a red solution, a contradiction.}
\endgroup
Thus, we arrive at a contradiction in both the subcases. Therefore, $R_2(c,q) \leq (q+1)(c+2)+c+1$. \end{proof}

From Claim \ref{clm3.3} and \ref{clm3.4} we get that when either $c$ or $q$ is even, and $q >1$, $R_2(c,q) = (q+1)(c+2)+c+1$.
\begin{rem}
Note that the same proof does not follow when $c=0$ because our coloring starts from $1$ but we have used a coloring of $c$ in our computation. Therefore, the Rado number is different from that in \cite{myers2006two}.
\end{rem}

\vspace{2mm}
Thus, we get the two-color off-diagonal Rado number for all three cases. \end{proof}

\section{Conclusion}
The study of Rado numbers contains numerous open problems. It is well known that the complexity of these problems increases substantially with the number of colors involved. As a result, most existing research has concentrated on 2-color Rado numbers. In this context, several variants, including disjunctive  and off-diagonal Rado numbers, have been studied for various classes of linear or system of linear equations. While several variants of Rado numbers—including disjunctive \cite{johnson2005disjunctive} and off-diagonal \cite{jing2024some,myers_offdiagonal2006,robertson_schaal2001} forms—have been studied for different classes of equations, the off-diagonal literature has largely been confined to homogeneous systems. 

In this work, we evaluated the $2$-color off-diagonal rado number, $R_2(c,q)$, for $x+y+c=z$ and  $x+qy=z$. Future work may involve studying broader families of equations which can be a combination of homogeneous and heterogeneous equations and exploring possible generalizations. Also, known results in such framework can be extended to more than two colors.
\bibliographystyle{plain}
\bibliography{rado}
\end{document}